\DeclareSymbolFont{rsfs}{U}{rsfs}{m}{n}
\DeclareSymbolFontAlphabet{\mathcal}{rsfs}
\DeclareTextFontCommand{\textcyr}{\fontencoding{OT2}
    \fontfamily{wncyr}\fontseries{m}\fontshape{n}\selectfont}
\newcommand{\Sha}{\textcyr{Sh}}
\theoremstyle{plain}
\newtheorem{theo}{Th\'eor\`eme}[section]
\newtheorem{prop}[theo]{Proposition}
\newtheorem{cor}[theo]{Corollaire}
\newtheorem{lem}[theo]{Lemme}
\newenvironment{ex}{\noindent{\textbf{Exemple :}}}{}
\theoremstyle{definition}
\newtheorem{rem}[theo]{Remarque}
\def \Romannumeral #1 {\expandafter\uppercase\expandafter {\romannumeral #1} }
\def \Br {{\rm{Br}}}
\def \A{{\mathbf A}}
\def \Spec {{\rm{Spec\,}}}
\def \SL {{\rm {SL}}}
\def \PGL {{\rm {PGL}}}
\def\Q{{\mathbf{Q}}}
\def\Z{{\mathbf{Z}}}
\DeclareMathOperator{\coker}{coker}
\def\A{{\mathbf{A}}}
\newcommand{\Ext}{\textup{Ext}}
\def\Br{{\rm Br}}
\def\Pic{{\rm Pic}}
\def\G{{\mathbf{G}}}
\title[Points entiers sur les espaces homog\`enes \`a stabilisateurs finis]{        
Obstructions de Brauer-Manin enti\`eres sur les espaces homog\`enes \`a stabilisateurs finis nilpotents}
\author{Cyril Demarche}
\address{Sorbonne Universit\'es, UPMC Univ Paris 06, UMR 7586, Institut de Math\'ematiques de Jussieu-Paris Rive Gauche, Case 247, 4 place Jussieu, F-75005, Paris, France}
\address{Univ Paris Diderot, Sorbonne Paris Cit\'e, UMR 7586, IMJ-PRG, F-75013 Paris, France}
\address{CNRS, UMR7586, IMJ-PRG, F-75005 Paris, France}
\email{demarche@math.jussieu.fr}
\thanks{Ce travail a b\'en\'efici\'e d'une aide de l'Agence Nationale de la Recherche portant la r\'ef\'erence ANR-12-BL01-0005.}
\subjclass{Primary: 11E72; Secondary: 14G05, 14M17, 20G30}
\begin{document}

\date{\today}

\maketitle

\begin{abstract}
Soit $k$ un corps de nombres. On construit des espaces homog\`enes de $\SL_{n,k}$ \`a stabilisateurs finis nilpotents non commutatifs pour lesquels l'obstruction de Brauer-Manin est insuffisante pour expliquer le d\'efaut d'approximation forte (resp. le d\'efaut du principe de Hasse entier).
\end{abstract}

\selectlanguage{english}

\begin{abstract}
Let $k$ be a number field. We construct homogeneous spaces of $\SL_{n,k}$ with finite nilpotent non-abelian stabilizers for which the Brauer-Manin obstruction does not explain the failure of strong approximation (resp. the failure of the integral Hasse principle).
\end{abstract}

\selectlanguage{french}

\def\pn{\par\noindent}

\bigskip

\setcounter{section}{-1}

\section{Introduction}

Si $k$ est un corps de nombres et $H$ est un $k$-groupe alg\'ebrique fini, il serait tr\`es int\'eressant de conna\^itre pour quels ensembles finis $S$ de places de $k$ l'application naturelle
$$H^1(k,H) \to \prod_{v \in S} H^1(k_v,H)$$
est surjective.
Cette question (parfois appel\'ee probl\`eme de Grunwald), motiv\'ee par exemple par le probl\`eme inverse de Galois et par celui de l'existence de corps de nombres \`a ramification prescrite, est intimement li\'ee \`a l'\'etude de l'approximation faible sur l'espace homog\`ene $\SL_{n,k}/H$ d\'efini par une repr\'esentation fid\`ele $H \to \SL_{n,k}$ (voir par exemple \cite{H1}), et plus pr\'ecis\'ement \`a l'\'etude de l'obstruction de Brauer-Manin \`a ladite approximation faible sur cette vari\'et\'e. Ce probl\`eme est encore loin d'\^etre r\'esolu (voir par exemple \cite{H1} ou \cite{D}), tout comme son analogue concernant le principe de Hasse sur des espaces homog\`enes \`a stabilisateurs g\'eom\'etriques finis.

Dans cette note, on s'int\'eresse aux versions enti\`eres de ces probl\`emes, \`a savoir l'approximation forte et le principe de Hasse entier sur les espaces homog\`enes du groupe $\SL_{n,k}$ \`a stabilisateurs finis sur un corps de nombres $k$. 

On montre que pour de telles vari\'et\'es alg\'ebriques, si les stabilisateurs sont des groupes constants nilpotents non commutatifs et si le corps de base contient suffisamment de racines de l'unit\'e, l'obstruction de Brauer-Manin (m\^eme en tenant compte de la partie dite "transcendante" du groupe de Brauer) ne permet pas d'expliquer le d\'efaut d'approximation forte (th\'eor\`eme \ref{theo AF}) ou du principe de Hasse entier (th\'eor\`eme \ref{theo PH}). Autrement dit, concernant l'approximation forte, pour tout $p$-groupe fini non commutatif $H$ et tout morphisme injectif de groupes $H \to \SL_{n,k}$, l'espace homog\`ene quotient $X := \SL_{n,k}/H$ poss\`ede des points ad\'eliques v\'erifiant les conditions de Brauer-Manin qui ne peuvent \^etre approxim\'es pour la topologie ad\'elique par des points rationnels de $X$ (hors d'un ensemble fini fix\'e de places de $k$); concernant le principe de Hasse entier, sous les m\^emes hypoth\`eses, il existe un mod\`ele entier $\mathcal{X}$ de $X$, lisse s\'epar\'e de type fini sur un anneau d'entiers de $k$, tel que $\mathcal{X}$ poss\`ede des points entiers localement partout v\'erifiant les conditions de Brauer-Manin, mais tel que $\mathcal{X}$ n'admette cependant pas de point entier global. 

En particulier, cela montre combien l'arithm\'etique des espaces homog\`enes \`a stabilisateurs finis diff\`ere de celle des espaces homog\`enes \`a stabilisateurs connexes ou ab\'eliens, o\`u au contraire l'obstruction de Brauer-Manin est la seule obstruction \`a l'approximation forte et au principe de Hasse entier (voir par exemple \cite{CTX} ou \cite{BD}). 

Ces r\'eponses n\'egatives \`a la suffisance des obstructions de Brauer-Manin enti\`eres ne permettent toutefois pas d'aborder les questions analogues concernant le principe de Hasse rationnel et l'approximation faible, qui demeurent donc toujours largement ouvertes.

\subsection*{Remerciements} Je remercie vivement Jean-Louis Colliot-Th\'el\`ene, Yonatan Harpaz et Giancarlo Lucchini Arteche pour leurs pr\'ecieux commentaires et leur int\'er\^et pour ce texte.

\subsection*{Quelques notations}
Dans tout ce texte, $k$ est un corps de caract\'eristique nulle, $\overline{k}$ une cl\^oture alg\'ebrique fix\'ee de $k$, $\Gamma_k$ d\'esigne le groupe de Galois de l'extension $\overline{k}/k$. Dans toute cette note, la cohomologie utilis\'ee est la cohomologie \'etale.

Si $k$ est un corps de nombres, on note $\Omega_k$ l'ensemble des places de $k$; si $v \in \Omega_k$, on note $k_v$ le compl\'e\'e de $k$ en $v$ et $\mathcal{O}_v$ son anneau des entiers (par convention, $\mathcal{O}_v := k_v$ si $v$ est une place non archim\'edienne); si $S$ est un ensemble fini de places de $k$, on note $\mathcal{O}_{k,S}$ l'anneau des $S$-entiers de $k$ et $\A_k^S$ l'anneau des ad\`eles hors de $S$. 
Si $X$ est une $k$-vari\'et\'e, on note $\Br(X) := H^2(X,\G_m)$ le groupe de Brauer de $X$. On renvoie \`a \cite{Sko} pour la d\'efinition de l'accouplement de Brauer-Manin $X(\A_k) \times \Br(X) \to \Q / \Z$ et du sous-ensemble $X(\A_k)^\Br \subset X(\A_k)$, ainsi qu'\`a la premi\`ere section de \cite{CTX} pour les g\'en\'eralit\'es sur l'approximation forte et l'obstruction de Brauer-Manin enti\`ere.

\section{Groupe de Brauer}
Dans cette section, on montre le r\'esultat simple suivant, qui g\'en\'eralise les r\'esultats classiques sur la partie alg\'ebrique du groupe de Brauer dans un cas particulier. Si $k$ est un corps et $H$ un $k$-groupe alg\'ebrique, on note $\Ext^c_k(H, \G_m)$ le groupe des extensions centrales de $k$-groupes alg\'ebriques de $H$ par $\G_m$.

\begin{prop} \label{prop Brauer}
Soit $k$ un corps de caract\'eristique nulle et $G$ un $k$-groupe alg\'ebrique semi-simple simplement connexe. Soit $H$ un $k$-sous-groupe constant fini de $G$, $X := G/H$ l'espace homog\`ene correspondant et $\pi : G \to X$ la projection.
On d\'efinit $\Br(X,G) := \ker(\Br(X) \xrightarrow{\pi^*} \Br(G))$.

Alors on dispose d'un isomorphisme canonique de groupes ab\'eliens
$$\Delta'_X : \Ext^c_k(H, \G_m) \xrightarrow{\cong} \Br(X,G) \cong \Br(X)/\Br(k) \, ,$$
qui est fonctoriel en $H$ au sens suivant : si $K \subset H$ est un sous-groupe de $H$, alors le morphisme canonique $f : Y := G/K \to X = G/H$ induit un diagramme naturel
\begin{displaymath}
\xymatrix{
\Ext^c_k(H, \G_m)\ar[r]^{\Delta'_X} \ar[d] & \Br(X)/\Br(k) \ar[d]^{f^*} \\
\Ext^c_k(K, \G_m)\ar[r]^{\Delta'_Y} & \Br(Y)/\Br(k)
}
\end{displaymath}
qui est commutatif.
\end{prop}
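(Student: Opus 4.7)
Le plan est d'exploiter la suite spectrale de Hochschild-Serre associée au $H$-torseur étale $\pi : G \to X$ :
\[
E_2^{p,q} = H^p(H, H^q_\et(G, \G_m)) \Rightarrow H^{p+q}_\et(X, \G_m).
\]
Puisque $G$ est semi-simple simplement connexe (donc géométriquement intègre), Rosenlicht donne $H^0(G, \G_m) = k^*$, sur lequel $H$ agit trivialement (il opère par translation, au-dessus de $\id_{\Spec k}$), et $\Pic(G) = 0$ (fait classique, via Hilbert 90 et $\Pic(\overline G) = 0$). Donc $E_2^{p,1} = 0$ pour tout $p$, et la suite exacte de bas degré produit
\[
0 \to H^2(H, k^*) \to \Br(X) \xrightarrow{\pi^*} \Br(G)^H,
\]
ce qui donne $\Br(X, G) := \ker(\pi^*) \cong H^2(H, k^*)$. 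Pour $H$ constant fini, ce dernier groupe s'identifie à $\Ext^c_k(H, \G_m)$ : une extension centrale de $k$-groupes algébriques $1 \to \G_m \to E \to H \to 1$ est entièrement déterminée par son $2$-cocycle $H \times H \to \G_m(k) = k^*$ (l'action étant triviale par centralité). Ceci fournit l'isomorphisme $\Delta'_X$.

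Pour établir le second isomorphisme $\Br(X, G) \cong \Br(X)/\Br(k)$, je m'appuierais sur le point rationnel $x_0 := \pi(e) \in X(k)$ fourni par l'élément neutre $e \in G(k)$. Le théorème d'Iversen donne $\Br(\overline G) = 0$, et joint à $\Pic(\overline G) = 0$, la suite spectrale de Hochschild-Serre galoisienne entraîne $\Br(G) = \Br(k)$ via $e^*$. Puisque $x_0^* = e^* \circ \pi^*$ avec $e^*$ isomorphisme, on obtient $\ker(\pi^*) = \ker(x_0^*)$, qui est un supplémentaire direct de $\Br(k) \hookrightarrow \Br(X)$ (scindé par $x_0^*$). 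D'où $\Br(X, G) \cong \Br(X)/\Br(k)$.

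La fonctorialité en $H$ résulte de la fonctorialité de la suite spectrale de Hochschild-Serre par changement du groupe structural. L'inclusion $K \subset H$ fournit $f : Y = G/K \to X = G/H$ avec $\pi_X = f \circ \pi_Y$, où $\pi_Y : G \to Y$ est le $K$-torseur induit. La restriction $H^2(H, k^*) \to H^2(K, k^*)$ induite sur le terme $E_2^{2,0}$ des suites spectrales correspond à $f^*$ sur l'aboutissement, d'où la commutativité du diagramme. Le point technique principal, qui conditionne tout le calcul, est l'identification $\Br(G) = \Br(k)$ pour $G$ semi-simple simplement connexe ; les autres étapes — Hochschild-Serre, Rosenlicht, annulation de $\Pic$ — sont des faits standards dans ce contexte.
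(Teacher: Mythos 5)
Your proof is correct and follows essentially the same route as the paper: the Hochschild--Serre spectral sequence you use for the Galois covering $\pi\colon G\to X$ is, in this situation, the same as the paper's \v{C}ech spectral sequence for the cover (via $G\times_X G\cong G\times_k H$), and both arguments rest on $\Pic(G)=0$, the identification of the $E_2^{2,0}$-term $H^2(H,k^*)$ with $\Ext^c_k(H,\Gm)$, and $\Br(k)\isoto\Br(G)$. The only cosmetic difference is that you obtain $\Br(G)=\Br(k)$ from Iversen's vanishing of $\Br(\overline{G})$ plus Galois descent and the rational point $e$, where the paper simply cites Gille's theorem.
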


\begin{rem}
On dispose \'egalement du morphisme naturel $\Delta_X : \Ext^c_k(H, \G_m) \to \Br(X,G) \cong \Br(X)/\Br(k)$ qui est donn\'e par le cobord en cohomologie \'etale (voir par exemple \cite{CTX}, p.313--314). Il est probable que les morphismes $\Delta_X$ et $\Delta'_X$ co\"incident (au signe pr\`es), mais une telle comparaison n'est pas n\'ecessaire dans ce texte.
\end{rem} 

\begin{proof}
On consid\`ere la suite spectrale de \v{C}ech relative au recouvrement (\'etale) donn\'e par le morphisme $\pi : G \to X$ (qui est un $X$-torseur sous le groupe $H$) :
$$E_2^{p,q} = \check{H}^p(G/X, \underline{H}^q(\G_m)) \implies H^{p+q}(X, \G_m) \, .$$

On obtient une suite exacte en bas degr\'e de la forme
$$\check{H}^0(G/X, \underline{H}^1(\G_m)) \to \check{H}^2(G/X, \G_m) \to \ker(\Br(X) \xrightarrow{\pi^*} \Br(G)) \to \check{H}^1(G/X, \underline{H}^1(\G_m)) \, .$$

On montre alors que $\check{H}^i(G/X, \underline{H}^1(\G_m)) = 0$ pour tout $i \geq 0$, en utilisant le fait que 
$$\Pic(G \times_X G \times_X \dots \times_X G) = \Pic(G \times_k H \times_k \dots \times_k H) = 0 \, ,$$
puisque $\Pic(G) = 0$ et $H$ est fini constant. Par cons\'equent, la suite exacte pr\'ec\'edente induit un isomorphisme 
$$ \check{H}^2(G/X, \G_m) \xrightarrow{\cong} \ker(\Br(X) \xrightarrow{\pi^*} \Br(G)) = \Br(X,G) \, .$$

Identifions maintenant les groupes $\check{H}^2(G/X, \G_m)$ et $\Ext^c_k(H, \G_m)$ : pour cela, en utilisant de nouveau les isomorphismes canoniques $G \times_k H \times_k \dots \times_k H \xrightarrow{\cong} G \times_X G \times_X \dots \times_X G$, on voit que $\check{H}^2(G/X, \G_m)$ s'identifie naturellement au groupe de cohomologie (au sens de la cohomologie des groupes usuelle) $H^2(H, k[G]^*) = H^2(H,k^*)$, o\`u $H$ agit trivialement sur $k^*$. 

Ensuite, puisque $H$ est un groupe constant, on a un isomorphisme de groupes ab\'eliens $H^2(H,k^*) \cong H^2_0(H, \G_m)$, o\`u le second groupe est le groupe de cohomologie de Hochschild pour l'action triviale du $k$-groupe $H$ sur le $k$-groupe $\G_m$. 

Enfin, un r\'esultat classique (voir par exemple \cite{DG}, II.3, proposition 2.3) assure que ce dernier groupe de cohomologie de Hochschild s'identifie canoniquement au groupe $\Ext^c_k(H, \G_m)$. 

On a donc construit un isomorphisme canonique $\Delta'_X : \Ext^c_k(H, \G_m) \xrightarrow{\cong} \Br(X,G)$, dont il est facile de v\'erifier la fonctorialit\'e en $H$.

Enfin, l'isomorphisme $\Br(X,G) \cong \Br(X)/\Br(k)$ est une cons\'equence du th\'eor\`eme principal de \cite{G} et de son corollaire, qui assurent que $\Br(k) \cong \Br(G)$.
\end{proof}

\section{Obstruction de Brauer-Manin \`a l'approximation forte}

Soit $k$ un corps de nombres. On consid\`ere l'espace homog\`ene $X := G/H$, o\`u $G$ est un $k$-groupe alg\'ebrique semi-simple simplement connexe, et $H$ un $k$-groupe fini nilpotent.
Si $S$ est un ensemble fini de places de $k$, on note $\pi^S : X(\A_k) \to X(\A_k^S)$ la projection des points ad\'eliques de $X$ vers les points ad\'eliques hors de $S$ dans $X$, et $\overline{X(k)}^S$ l'adh\'erence de $X(k)$ dans $X(\A^S)$ (muni de la topologie ad\'elique).

\begin{theo} \label{theo AF}
Soit $p$ un nombre premier. Pour tout groupe fini \emph{non commutatif} $H$ d'ordre $p^n$, pour tout ensemble fini $S_0$ de places de $k$, si $k$ contient les racines $p^{n+1}$-i\`emes de l'unit\'e, alors il existe un point ad\'elique $x \in X(\A_k)^\Br$ tel que $\pi^{S_0}(x) \notin \overline{X(k)}^{S_0}$.

Autrement dit, "l'obstruction de Brauer-Manin \`a l'approximation forte" sur $X$ n'est pas la seule.
\end{theo}

\begin{rem}
On ne peut pas v\'eritablement parler d'obstruction de Brauer-Manin \`a l'approximation forte en g\'en\'eral (notamment quand le groupe de Brauer de $X$ modulo le groupe de Brauer de $k$ est infini), c'est pourquoi l'expression est entre guillemets dans l'\'enonc\'e. Il faut donc comprendre l'\'enonc\'e "l'obstruction de Brauer-Manin \`a l'approximation forte sur $X$ n'est pas la seule" au sens de la d\'efinition 2.4 de \cite{CTX2}, c'est-\`a-dire au sens qui est explicit\'e dans l'\'enonc\'e du th\'eor\`eme.
\end{rem}

\begin{proof}
La proposition \ref{prop Brauer} assure que l'on a un isomorphisme canonique
$$\Delta'_X : \textup{Ext}^c_k(H, \G_m) \to \Br(X) / \Br(k) \, ,$$
qui est fonctoriel en $H$ au sens pr\'ecis\'e dans l'\'enonc\'e de la proposition \ref{prop Brauer}.
Puisque $H$ est nilpotent, il existe un sous-groupe central cyclique non trivial $Z$ contenu dans le sous-groupe d\'eriv\'e $D(H)$. On consid\`ere alors la suite exacte naturelle
$$1 \to Z \to H \to H' := H/Z \to 1 \, .$$

Cette suite induit le complexe suivant
\begin{equation} \label{sec ext}
\Ext^c_k(H', \G_m) \to \Ext^c_k(H, \G_m) \to \Ext^c_k(Z, \G_m) \, .
\end{equation}

Montrons le lemme cl\'e suivant :
\begin{lem} \label{lem morph brauer nul}
Sous les hypoth\`eses du th\'eor\`eme \ref{theo AF}, le morphisme naturel $\Ext^c_k(H, \G_m) \to \Ext^c_k(Z, \G_m)$ est le morphisme nul.
\end{lem}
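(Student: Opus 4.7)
\textbf{(Proof plan.)} By the identification established in the proof of Proposition \ref{prop Brauer}, $\Ext^c_k(H, \G_m) \cong H^2(H, k^*)$ and $\Ext^c_k(Z, \G_m) \cong H^2(Z, k^*)$ with trivial actions on $k^*$, and the morphism to analyse is the natural restriction map $H^2(H, k^*) \to H^2(Z, k^*)$. Since $Z = \langle z \rangle$ is cyclic of order $p^e$ for some $1 \le e \le n$, one has $H^2(Z, k^*) \cong k^*/(k^*)^{p^e}$, and the image of a class $[E] \in H^2(H, k^*)$, represented by a central extension $1 \to k^* \to E \to H \to 1$, is the class of $\tilde{z}^{p^e} \in k^*$ for any lift $\tilde{z} \in E$ of $z$. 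It thus suffices to exhibit a lift for which $\tilde{z}^{p^e}$ is a $p^e$-th power in $k^*$.

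Since $z \in D(H)$, I would write $z = \prod_i [a_i, b_i]$ in $H$ and choose the lift $\tilde{z} := \prod_i [\tilde{a}_i, \tilde{b}_i] \in [E, E]$. The heart of the proof is an explicit computation for a single commutator $w = [a, b]$ with $a$ of order $p^s$ in $H$. Setting $\tilde{w} := [\tilde{a}, \tilde{b}]$ and $\alpha := [\tilde{w}, \tilde{a}]$ (which lies in $k^*$ since $z$, hence $w$, is central in $H$), an induction on $n$ yields the identity $\tilde{a}^n \tilde{b} \tilde{a}^{-n} = \alpha^{-\binom{n}{2}} \tilde{w}^n \tilde{b}$. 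Specializing at $n = p^s$ and using that $\tilde{a}^{p^s} \in k^*$ is central gives $\tilde{w}^{p^s} = \alpha^{\binom{p^s}{2}}$. The commutator pairing forces $\alpha$ to be a root of unity of order dividing $p^e$; since the $p$-adic valuation of $\binom{p^s}{2}$ equals $s$ for $p$ odd and $s - 1$ for $p = 2$, and $s \ge e$ (because $\tilde{w}^{p^s} \in k^*$ projects to $z^{p^s} = 1$), one shows that $\tilde{w}^{p^e}$ is either trivial or, in the extremal case $p = 2$ with $s = e$, equal to $-1$. In both subcases the hypothesis $\mu_{p^{n+1}} \subset k^*$---the extra factor $p$ being needed precisely to extract a $p^e$-th root of $-1$ when $p = 2$---ensures that $\tilde{w}^{p^e} \in (k^*)^{p^e}$.

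The remaining task, which I expect to be the main technical obstacle, is to extend this single-commutator analysis to the product $\tilde{z} = \prod_i [\tilde{a}_i, \tilde{b}_i]$, since the factors need not commute in $E$. To handle this, I would exploit the nilpotent structure of the $p$-group $H$: the iterated commutators arising when expanding $\tilde{z}^{p^e}$ lie in progressively deeper terms of the lower central series of $E$, and eventually land in $k^*$, where they contribute only bounded-order roots of unity absorbed by the $\mu_{p^{n+1}}$ hypothesis. Combining this control of cross-terms with the single-commutator bound on each factor yields $\tilde{z}^{p^e} \in (k^*)^{p^e}$, establishing that the restriction map $\Ext^c_k(H, \G_m) \to \Ext^c_k(Z, \G_m)$ is zero.
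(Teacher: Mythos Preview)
Your approach differs genuinely from the paper's, and while the single-commutator computation is essentially correct, the extension to products has a real gap.

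The paper's proof avoids explicit cocycle computations entirely. After reducing to extensions by $\mu_{p^{n+1}}$ (via surjectivity of $\Ext^c_k(H,\mu_{p^n}) \to \Ext^c_k(H,\G_m)$ and a pushforward), the restricted extension of $Z$ splits over~$\bar{k}$ for elementary reasons. The key step is then purely Galois-theoretic: since $H$ is constant, for any $g,h$ in the extension $H_3(\bar{k})$ and any $\gamma\in\Gamma_k$ one has ${}^\gamma[g,h]=[{}^\gamma g,{}^\gamma h]=[g,h]$, the second equality because ${}^\gamma g$ differs from $g$ by a central element. Thus $\Gamma_k$ acts trivially on the whole of $D(H_3(\bar{k}))$; since $Z\subset D(H)$, the preimage $Z_3(\bar{k})$ lies in $D(H_3(\bar{k}))\cdot\mu_{p^{n+1}}(\bar{k})$, on which Galois acts trivially by hypothesis. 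The abstract splitting is therefore Galois-equivariant and descends to~$k$. This handles arbitrary products of commutators in one stroke: the point is not a bound on powers of individual commutators, but that the entire derived subgroup of the extension is Galois-fixed.

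The gap in your plan is precisely where you expected it. When $z=\prod_i[a_i,b_i]$ with several factors, the individual $w_i=[a_i,b_i]$ need \emph{not} be central in $H$, so your assertion that $\alpha_i:=[\tilde{w}_i,\tilde{a}_i]\in k^*$ fails, and with it the formula $\tilde{w}_i^{p^s}=\alpha_i^{\binom{p^s}{2}}$ on which the ``single-commutator bound on each factor'' relies. Your sketch via the lower central series is plausible but would require real work to make precise. A cleaner rescue of your strategy is to note that, since $H$ is a nonabelian $p$-group, one may always \emph{choose} the generator $z$ of $Z$ to be a single commutator: any nontrivial iterated commutator $[[x_1,\ldots,x_{c-1}],x_c]$ lying in the last nonzero term of the lower central series is simultaneously a simple commutator and central. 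With that choice the product case never arises and your computation goes through; but as written, the proposal is incomplete.
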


\begin{proof}
On constate d'abord que le groupe $\Ext^c_k(H,\G_m) \cong H^2(H, k^*)$ est un groupe ab\'elien de $p^n$-torsion, car $H$ est de cardinal $p^n$. En outre, on v\'erifie que la suite naturelle de morphismes de groupes
$$\Ext^c_k(H,\mu_{p^n}) \to \Ext^c_k(H,\G_m) \xrightarrow{[p^n]} \Ext^c_k(H,\G_m)$$
est exacte. Cela assure donc que le morphisme 
$$\Ext^c_k(H,\mu_{p^n}) \to \Ext^c_k(H,\G_m)$$
est surjectif.

Soit une extension centrale
$$1 \to \G_m \to H_1 \to H \to 1 \, .$$
On sait donc qu'il existe une extension centrale 
$$1 \to \mu_{p^n} \to H_2 \to H \to 1$$
et un diagramme commutatif de suites exactes courtes
\begin{displaymath}
\xymatrix{
1 \ar[r] & \mu_{p^n} \ar[d]^i \ar[r] & H_2 \ar[d] \ar[r] & H \ar[d]^= \ar[r] & 1 \\
1 \ar[r] & \G_m \ar[r] & H_1 \ar[r] & H \ar[r] & 1 \, ,
}
\end{displaymath}
o\`u $i$ est l'inclusion canonique.

Notons alors 
$$1 \to \mu_{p^n} \to Z_2 \to Z \to 1$$
la suite exacte obtenue en tirant en arri\`ere l'extension $H_2$ par l'inclusion $Z \to H$. Puisque $Z$ est cyclique d'ordre $p$, l'extension $Z_3$ obtenue \`a partir de $Z_2$ en poussant en avant selon le morphisme naturel $\mu_{p^n} \to \mu_{p^{n+1}}$ est scind\'ee sur $\overline{k}$. On obtient donc finalement un diagramme commutatif dont les lignes sont des extensions centrales de $k$-groupes et dont le carr\'e de droite est cart\'esien :
\begin{displaymath}
\xymatrix{
1 \ar[r] & \mu_{p^{n+1}} \ar[d]^= \ar[r] & Z_3 \ar[d] \ar[r] & Z \ar[d] \ar[r] & 1 \\
1 \ar[r] & \mu_{p^{n+1}} \ar[r] & H_3 \ar[r] & H \ar[r] & 1 \, ,
}
\end{displaymath}
o\`u $H_3$ est obtenu en poussant en avant $H_2$ par le morphisme $\mu_{p^{n+1}} \to \mu_{p^{n}}$ et o\`u l'extension sup\'erieure est scind\'ee comme extension de groupes abstraits.

On remarque alors que pour tous $g, h \in H_3(\overline{k})$ et tout $\gamma \in \Gamma_k$, ${^\gamma [g,h]} = [{^\gamma g}, {^\gamma h}]$. Or le groupe $H$ est constant, donc pour tout $g \in H_3(\overline{k})$ et tout $\gamma \in \Gamma_k$, ${^\gamma g}$ diff\`ere de $g$ par un \'el\'ement du centre de $H_3$. Par cons\'equent, pour tous $g, h \in H_3(\overline{k})$ et tout $\gamma \in \Gamma_k$, ${^\gamma [g,h]} = [{^\gamma g}, {^\gamma h}] = [g,h]$. Donc $\Gamma_k$ agit trivialement sur les commutateurs, donc sur $D(H_3(\overline{k}))$. 

Or le sous-groupe $Z_3(\overline{k})$ de $H_3(\overline{k})$ est contenu dans le sous-groupe de $H_3(\overline{k})$ engendr\'e par $D(H_3(\overline{k}))$ et $\mu_{p^{n+1}}(\overline{k})$, donc on en d\'eduit que l'action de Galois sur $Z_3(\overline{k})$ est triviale (on rappelle que $\mu_{p^{n+1}}(\overline{k}) \subset k^*$). Cela assure que l'extension $Z_3$ est scind\'ee comme extension de $k$-groupes alg\'ebriques. En particulier, cela implique que l'extension 
$$1 \to \G_m \to Z_1 \to Z \to 1 \, ,$$
obtenue en tirant en arri\`ere $H_1$ (et qui co\"incide avec l'extension $Z_3$ pouss\'ee par le morphisme $\mu_{p^{n+1}} \to \G_m$), est scind\'ee, donc a une classe triviale dans $\Ext^c_k(Z,\G_m)$, ce qui conclut la preuve.



\end{proof}

Par cons\'equent, la proposition \ref{prop Brauer} assure que le morphisme canonique $f^* : \Br(X)/\Br(k) \to \Br(Y)/\Br(k)$ induit par $f : Y = G/Z \to X = G/H$ est le morphisme nul.

On rappelle que l'on dispose des applications caract\'eristiques suivantes (pour tout $k$-sch\'ema $T$) qui s'ins\`erent dans un diagramme commutatif :
\begin{displaymath}
\xymatrix{
Y(T) \ar[r] \ar[d]^f & H^1(T,Z) \ar[d] \\
X(T) \ar[r] & H^1(T,H) \, .
}
\end{displaymath}
Via ces applications horizontales (fonctorielles en $T$), les propri\'et\'es d'approximation sur les $k$-vari\'et\'es $X$ et $Y$ se traduisent en des propri\'et\'es d'approximation au niveau de la cohomologie galoisienne des groupes $Z$ et $H$, traductions que l'on utilisera r\'eguli\`erement dans la suite (voir par exemple \cite{H1}).

On a en outre un diagramme commutatif dont les deux premi\`eres lignes sont des suites exactes d'ensembles point\'es :
\begin{displaymath}
\xymatrix{
1 \ar[r] & H^1(k, Z) \ar[r] \ar[d] & H^1(k, H) \ar[r] \ar[d] & H^1(k, H') \ar[r] \ar[d] & H^2(k, Z) \ar[d] \\
1 \ar[r] & P^1(k, Z) \ar[r] \ar[d]^{\partial_Z} & P^1(k, H) \ar[r] \ar[d]^{\partial_H} & P^1(k, H') \ar[r] & P^2(k, Z) \\
& (\Br(Y)/\Br(k))^D \ar[r]^{0} & (\Br(X)/\Br(k))^D &  & \, ,
}
\end{displaymath}
o\`u $P^1(k,H)$ d\'esigne le produit restreint des ensembles $H^1(k_v, H)$ par rapport aux sous-ensembles $H^1(\mathcal{O}_v, H)$.

Raisonnons maintenant par l'absurde pour d\'emontrer le th\'eor\`eme \ref{theo AF} et supposons donc que pour un certain ensemble fini de places $S_0$, on ait $\overline{X(k)}^{S_0} \supset \pi^{S_0} \left( X(\A_k)^{\Br} \right)$.

Soit $(z_v) \in P^1(k, Z)$. On consid\`ere l'image $(h_v) \in P^1(k, H)$ de $(z_v)$. Comme le morphisme $\Ext^c_k(H, \G_m) \to \Ext^c_k(Z, \G_m)$ est nul, on a $\partial_H((h_v)) = 0$. Puisque par hypoth\`ese $\overline{X(k)}^{S_0} \supset \pi^{S_0} \left( X(\A_k)^{\Br} \right)$,
on sait que pour tout ensemble fini $S$ contenant $S_0$, il existe $h^S \in H^1(\mathcal{O}_{k,S}, H)$ tel que $h^S_v = h_v$ pour tout $v \in S \setminus S_0$. Consid\'erons alors l'image $h'^S$ de $h^S$ dans $H^1(\mathcal{O}_{k,S}, H')$. Par construction, on a $h'^S_v = 1$ pour tout $v \in S \setminus S_0$, donc $h'^S \in H^1(\mathcal{O}_{k,S_0}, H')$ (voir par exemple le corollaire A.8 de \cite{GP}). Or l'ensemble de cohomologie \'etale $H^1(\mathcal{O}_{k,S_0}, H')$ est fini, donc en prenant $S$ suffisamment grand, on peut supposer que $h'^S_v = 1$ pour presque toute place $v$, donc $h'^S \in \Sha^1_{\omega}(k,H') = 1$, donc $h'^S = 1$ dans $H^1(\mathcal{O}_{k,S}, H')$ car $H^1(\mathcal{O}_{k,S}, H') \to H^1(k,H')$ a un noyau trivial par restriction-inflation. Cela implique alors que $h^S$ (pour $S$ assez grand) se rel\`eve en $z^S \in H^1(\mathcal{O}_{k,S},Z)$. Puisque pour toute place $v$, l'application $H^1(k_v,Z) \to H^1(k_v, H)$ est injective (car le groupe $H$ est constant et $Z$ est central dans $H$), on en d\'eduit que pour toute place $v \in S \setminus S_0$, on a $z^S_v = z_v$.

En r\'esum\'e, on a montr\'e que pour tout $(z_v) \in P^1(k, Z)$, pour tout $S$ contenant $S_0$, il existe $z^S \in H^1(\mathcal{O}_{k,S},Z)$ tel que pour tout $v \in S \setminus S_0$, on a $z^S_v = z_v$.

Ce dernier fait contredit alors la suite exacte de Poitou-Tate pour $Z$ : en effet, on dispose d'une suite exacte (voir par exemple \cite{M}, th\'eor\`eme I.4.10.(c))
$$H^1(k,Z) \to P^1(k,Z) \to H^1(k, \widehat{Z})^D \, ,$$
qui n'est autre qu'une variante de la premi\`ere colonne du diagramme pr\'ec\'edent.
En particulier, cette suite exacte assure que si l'on choisit $(z_v) \in P^1(k, Z)$ qui n'est pas orthogonal \`a $H^1(k, \widehat{Z})$ et tel que $z_v = 0$ pour toute place $v \in S_0$ (ce qui est toujours possible, puisque $H^1(k, \widehat{Z})$ est infini, alors que $\Sha^1_{\omega}(k, \widehat{Z})$ est fini), alors la propri\'et\'e pr\'ec\'edente ne peut \^etre v\'erifi\'ee.

Cette contradiction assure finalement que l'hypoth\`ese initiale est fausse, donc cela d\'emontre le th\'eor\`eme \ref{theo AF}. 
\end{proof}

\section{Obstruction de Brauer-Manin au principe de Hasse entier}

Au vu du th\'eor\`eme \ref{theo AF}, le th\'eor\`eme 1.4 de \cite{LX} assure qu'avec les notations de la section pr\'ec\'edente, et sous les hypoth\`eses du th\'eor\`eme \ref{theo AF}, il existe un $\mathcal{O}_{k,S_0}$-sch\'ema fid\`element plat, s\'epar\'e de type fini $\mathcal{X}$ pour lequel l'obstruction de Brauer-Manin au principe de Hasse entier n'est pas la seule, autrement dit :
$$\left( \prod_{v \in S_0} X(k_v) \times \prod_{v \notin S_0} \mathcal{X}(\mathcal{O}_v) \right)^{\Br(X)} \neq \emptyset \, ,$$
alors que
$$\mathcal{X}(\mathcal{O}_{k, S_0}) = \emptyset \, .$$

L'objectif de cette section est de construire un exemple explicite d'un tel sch\'ema $\mathcal{X}$, qui est lisse et muni d'une structure d'espace homog\`ene sous un $\mathcal{O}_{k,S_0}$-sch\'ema en groupes \'etendant la structure de $k$-espace homog\`ene sur $X$.

\begin{theo} \label{theo PH}
Soit $p$ un nombre premier, $k$ un corps de nombres contenant les racines $p^{n+1}$-i\`emes de l'unit\'e,
$H$ un groupe fini \emph{non commutatif} d'ordre $p^n$ et $S_0$ un ensemble fini de places de $k$ tel que $p \in \mathcal{O}_{k, S_0}^*$ et $\Pic(\mathcal{O}_{k,S_0}) / p \neq 0$.
Alors il existe une repr\'esentation fid\`ele $\rho : H \to \SL_{d,k}$ et un mod\`ele lisse s\'epar\'e de type fini $\mathcal{X} \to \Spec(\mathcal{O}_{k, S_0})$ de $X := \SL_{d,k}/H$, qui est un $\textup{Spec}(\mathcal{O}_{k, S_0})$-espace homog\`ene de $\SL_{1, \mathcal{A}}$ pour une certaine alg\`ebre d'Azumaya (g\'en\'eriquement d\'eploy\'ee) $\mathcal{A}$ sur $\mathcal{O}_{k, S_0}$, tel que 
$$\left( \prod_{v \in S_0} X(k_v) \times \prod_{v \notin S_0} \mathcal{X}(\mathcal{O}_v) \right)^{\Br(X)} \neq \emptyset \, ,$$
alors que
$$\mathcal{X}(\mathcal{O}_{k, S_0}) = \emptyset \, .$$
En particulier, l'obstruction de Brauer-Manin au principe de Hasse entier sur $\mathcal{X}$ n'est pas la seule.
\end{theo}

\begin{ex} 
Si $p$ est un nombre premier irr\'egulier, on peut consid\'erer le corps $k := \Q(\zeta_{p^{n+1}})$ et l'ensemble $S_0$ form\'e des places archim\'ediennes de $k$ et de l'unique place au-dessus de $p$. Alors on sait que $\Pic(\mathcal{O}_{k,S_0}) / p \neq 0$ (voir \cite{W}, exemple 5) et par cons\'equent toutes les hypoth\`eses du th\'eor\`eme \ref{theo PH} sont v\'erifi\'ees dans cet exemple.
\end{ex}

\begin{proof}
On note $A := \mathcal{O}_{k, S_0}$. On fixe un sous-groupe $Z$ cyclique d'ordre $p$ de $Z(H) \cap D(H)$.
Il existe alors une repr\'esentation fid\`ele de $H$, d\'efinie sur $A$ et dont la dimension $d$ est une puissance de $p$ :
$$\rho : H \to \SL_{d, A} \, ,$$
telle que $\rho(Z)$ soit central dans $\SL_{d,A}$ (on peut par exemple utiliser la repr\'esentation induite par une repr\'esentation fid\`ele de dimension $1$ de $Z$).

Cela induit donc un diagramme commutatif \`a lignes exactes de $A$-sch\'emas en groupes
\begin{displaymath}
\xymatrix{
1 \ar[r] & Z \ar[r] \ar[d]^i & H \ar[r] \ar[d]^{\rho} & H' \ar[r] \ar[d]^{\overline{\rho}} & 1 \\
1 \ar[r] & \mu_{d,A} \ar[r] & \SL_{d,A} \ar[r] & \PGL_{d,A} \ar[r] & 1 \, .
}
\end{displaymath}

Ce diagramme induit le diagramme commutatif suivant :
\begin{displaymath}
\xymatrix{
H^1(A, H') \ar[r]^(.5){\partial_H} \ar[d]^{\overline{\rho}_*} & H^2(A, Z) \ar[d]^{i_*} \\
H^1(A, \PGL_d) \ar[r]^{\partial_{\SL}} & H^2(A, \mu_d) \, .
}
\end{displaymath}

\begin{lem} \label{lem exact}
Sous les hypoth\`eses du th\'eor\`eme \ref{theo PH}, on dispose d'un diagramme commutatif exact :
\begin{displaymath}
\xymatrix{
& 0 \ar[d] & 0 \ar[d] & \\
& \Pic(A) / d \ar[d] \ar[r] & \Pic(A) / m \ar[d] \ar[r] & 0 \\
H^2(A,Z) \ar[r]^{i_*} & H^2(A, \mu_d) \ar[r] & \coker(i_*) \ar[r] & 0 \, ,
}
\end{displaymath}
o\`u $d = p m$, donc $m > 1$ est une puissance de $p$.
\end{lem}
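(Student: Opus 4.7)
The plan is to identify $\coker(i_*)$ with a subgroup of $H^2(A, \mu_m)$ via the long exact sequence attached to a well-chosen short exact sequence of sheaves, and to locate $\Pic(A)/m$ inside it via Kummer theory.

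First, since $Z$ is a constant cyclic group of order $p$ and $k$ (hence $A$) contains the $p$-th roots of unity, we may identify $Z \cong \mu_{p,A}$. Moreover, as $\rho(Z)$ is central in $\SL_{d,A}$ and $Z(\SL_{d,A}) = \mu_{d,A}$, the inclusion $i : Z \hookrightarrow \mu_{d,A}$ is, up to an automorphism of $Z$, the standard inclusion $\mu_p \hookrightarrow \mu_d$ (there is a unique subgroup of order $p$ in the cyclic group $\mu_d$). Since by hypothesis $p \in A^*$ and $d, m$ are powers of $p$, all three integers are invertible in $A$, and the sequence
$$1 \to \mu_p \to \mu_d \xrightarrow{(\cdot)^p} \mu_m \to 1$$
is exact on the \'etale site of $\Spec A$. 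The associated long exact sequence
$$H^2(A, \mu_p) \xrightarrow{i_*} H^2(A, \mu_d) \longrightarrow H^2(A, \mu_m)$$
then identifies $\coker(i_*)$ with the image of $H^2(A, \mu_d)$ inside $H^2(A, \mu_m)$.

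Next, I would invoke the Kummer short exact sequences $1 \to \mu_n \to \Gm \xrightarrow{n} \Gm \to 1$ for $n = d$ and $n = m$, giving
$$0 \to \Pic(A)/n \to H^2(A, \mu_n) \to \Br(A)[n] \to 0\, ,$$
which in particular provides the left column of the diagram. To describe the restriction of $H^2(A, \mu_d) \to H^2(A, \mu_m)$ to $\Pic(A)/d$, I consider the morphism of Kummer sequences
\begin{displaymath}
\xymatrix{
1 \ar[r] & \mu_d \ar[r] \ar[d]^{(\cdot)^p} & \Gm \ar[r]^{d} \ar[d]^{(\cdot)^p} & \Gm \ar[r] \ar[d]^{=} & 1 \\
1 \ar[r] & \mu_m \ar[r] & \Gm \ar[r]^{m} & \Gm \ar[r] & 1\, .
}
\end{displaymath}
Because the rightmost vertical arrow is the identity on $\Gm$, functoriality of the connecting morphism (Bockstein) forces the induced map $\Pic(A)/d \to \Pic(A)/m$ to be the natural projection, which is well-defined and surjective since $m \mid d$.

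Consequently, $\Pic(A)/m \hookrightarrow H^2(A, \mu_m)$ lies in the image of $H^2(A, \mu_d) \to H^2(A, \mu_m)$, hence in $\coker(i_*)$. This produces the desired injection $\Pic(A)/m \hookrightarrow \coker(i_*)$; the right-hand square commutes by construction, the left column is the Kummer sequence, and the surjectivity on the top row has just been observed. The only delicate point in this argument is the Bockstein computation — concretely, the fact that the rightmost vertical arrow in the morphism of Kummer sequences is forced to be the identity (not some twisted power map) by the constraint that the squares commute; once this is settled, all the remaining verifications are elementary diagram chases.
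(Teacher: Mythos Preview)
Your argument is correct and proceeds along the same lines as the paper's: identify $Z$ with $\mu_{p,A}$ (using $p\in A^*$ and $\mu_p\subset k$) and then exploit Kummer theory. The only cosmetic difference is that you realize $\coker(i_*)$ inside $H^2(A,\mu_m)$ via the long exact sequence attached to $1\to\mu_p\to\mu_d\to\mu_m\to 1$ and then compare the Kummer sequences for $\mu_d$ and $\mu_m$, whereas the paper compares the Kummer sequences for $\mu_p$ and $\mu_d$ directly (obtaining the map $[m]$ on $\Pic(A)$ and the identity on $\Br(A)$) and reads off the injection $\Pic(A)/m\hookrightarrow\coker(i_*)$ by a snake-type chase; the two computations are formally equivalent.
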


\begin{proof}
Les hypoth\`eses que $p \in A^*$ et que $A$ contienne les racines $p$-i\`emes de l'unit\'e assurent que l'on peut identifier $Z$ avec le $A$-sous-groupe de type multiplicatif $\mu_{p,A}$ de $\mu_{d,A}$. Le lemme r\'esulte alors d'une chasse au diagramme facile dans le diagramme commutatif exact naturel suivant :
\begin{displaymath}
\xymatrix{
\Pic(A) \ar[d]^{[p]} \ar[r]^{=} & \Pic(A) \ar[d]^{[d]} \\
\Pic(A) \ar[d] \ar[r]^{[m]} & \Pic(A) \ar[d] \\
H^2(A, Z) \ar[r]^{i_*} \ar[d] & H^2(A, \mu_d) \ar[d] \\
\Br(A) \ar[r]^= & \Br(A) \, .
}
\end{displaymath}
\end{proof}

En particulier, le lemme \ref{lem exact} implique que le morphisme $i_*$ n'est pas surjectif (sous les hypoth\`eses du th\'eor\`eme, le groupe $\Pic(A)/m$ est non trivial). En revanche, un r\'esultat de Harder (voir \cite{H2}, th\'eor\`eme 4.2.2) assure que l'application $\partial_{\SL}$ est surjective. Donc il existe un $A$-torseur $\mathcal{P}$ sous $\PGL_n$ dont la classe dans $H^1(A, \PGL_d)$ n'est pas dans l'image de $\overline{\rho}_*$, et on peut en outre supposer que l'image de la classe de $\mathcal{P}$ dans $H^2(A, \mu_d)$ provient d'un \'el\'ement de $\Pic(A)/d$ (d'image non nulle dans $\Pic(A)/m$) : voir le diagramme du lemme \ref{lem exact}.

D\'efinissons alors $\mathcal{X}$ comme le quotient de $\mathcal{P}$ par le sous-groupe $H'$ de $\PGL_{d,A}$. Par \cite{Ana} th\'eor\`eme 4.C et \cite{SGA3} proposition 9.2, $\mathcal{X}$ est repr\'esentable par un $A$-sch\'ema lisse s\'epar\'e de type fini. En outre, la proposition 34 du paragraphe I.5.3 de \cite{S} assure que $\mathcal{X}$, qui s'identifie canoniquement au tordu du $A$-espace homog\`ene $\PGL_d / H$ par le $A$-torseur $\mathcal{P}$ sous $\PGL_d$, est un $A$-espace homog\`ene de la $A$-forme int\'erieure ${_\mathcal{P}} \SL_{d, A}$ de $\SL_{d, A}$. De plus, le $A$-sch\'ema en groupes ${_\mathcal{P}} \SL_{d, A}$ est canoniquement isomorphe au $A$-sch\'ema en groupes $\SL_{1, \mathcal{A}}$, o\`u $\mathcal{A}$ est la $A$-alg\`ebre d'Azumaya associ\'ee au $\PGL_d$-torseur $\mathcal{P}$. 

Par construction, l'image de $\mathcal{P}$ dans $H^2(A, \mu_d)$ provient de $\Pic(A)/d$, donc l'image de $\mathcal{P}$ dans $H^1(k, \PGL_d) \cong H^2(k, \mu_d)$ provient de $\Pic(k)/d$, donc est triviale. Cela assure donc que l'alg\`ebre d'Azumaya $\mathcal{A}$ est g\'en\'eriquement triviale. En particulier, la fibre g\'en\'erique $P$ de $\mathcal{P}$ est un $k$-torseur trivial, donc $P(k) \neq \emptyset$ et $P$ est $k$-isomorphe \`a $\PGL_d$. Donc la fibre g\'en\'erique $X = P/H'$ de $\mathcal{X}$ admet un point rationnel et est $k$-isomorphe \`a $\PGL_d / H'$, donc \`a l'espace homog\`ene $\SL_{d,k}/H$.

Montrons que 
$$\left( \prod_{v \in S_0} X(k_v) \times \prod_{v \notin S_0} \mathcal{X}(\mathcal{O}_v) \right)^{\Br(X)} \neq \emptyset \, .$$
Puisque $H^1(\mathcal{O}_v, \PGL_d) = 1$ pour toute place $v \notin S_0$, et puisque pour toute place $v \in S_0$, la classe de $\mathcal{P}_{v}$ dans $H^1(k_v, \PGL_d)$ provient de $\Pic(k_v)/d = 0$, on a :
$$\prod_{v \in S_0} P(k_v) \times \prod_{v \notin S_0} \mathcal{P}(\mathcal{O}_v) \neq \emptyset \, ,$$
donc le morphisme quotient $\pi : \mathcal{P} \to \mathcal{X}$ assure que
$$\prod_{v \in S_0} X(k_v) \times \prod_{v \notin S_0} \mathcal{X}(\mathcal{O}_v) \neq \emptyset \, .$$
En outre, la proposition \ref{prop Brauer} et le lemme \ref{lem morph brauer nul} assurent que le morphisme $\pi^* : \Br(X)/\Br(K) \to \Br(P)/\Br(K)$ est le morphisme nul, donc par fonctorialit\'e, tout \'el\'ement de l'ensemble non vide $\prod_{v \in S_0} P(k_v) \times \prod_{v \notin S_0} \mathcal{P}(\mathcal{O}_v)$ s'envoie dans l'ensemble de Brauer-Manin de $X$.
Cela assure bien que 
$$\left( \prod_{v \in S_0} X(k_v) \times \prod_{v \notin S_0} \mathcal{X}(\mathcal{O}_v) \right)^{\Br(X)} \neq \emptyset \, .$$

Montrons pour finir que $\mathcal{X}(A) = \emptyset$. Supposons qu'il existe $x \in \mathcal{X}(A)$. Alors la fibre de $\pi$ en $x$ d\'efinit un $A$-torseur $\mathcal{Q}$ sous $H'$. Alors la classe de $\mathcal{Q}$ dans $H^1(A,H')$ s'envoie sur la classe de $\mathcal{P}$ dans $H^1(A, \PGL_d)$, i.e. $\mathcal{P} \cong \overline{\rho}_* \mathcal{Q}$. Mais par construction la classe de $\mathcal{P}$ n'est pas dans l'image de $\overline{\rho}_*$, d'o\`u une contradiction. 

Donc finalement $\mathcal{X}(A) = \emptyset$, ce qui conclut la preuve.
\end{proof}

\begin{rem}
On peut v\'erifier facilement que les deux exemples pr\'esent\'es aux th\'eor\`emes \ref{theo AF} et \ref{theo PH}, s'ils ne sont pas expliqu\'es par l'obstruction de Brauer-Manin enti\`ere, le sont en revanche par ce que l'on peut appeler une obstruction de Brauer-Manin \'etale enti\`ere (voir \cite{P}, section 3.3 pour la d\'efinition de la version rationnelle de cette obstruction). En effet, dans le th\'eor\`eme \ref{theo AF}, les points ad\'eliques de $X$ orthogonaux au groupe de Brauer qui ne sont pas dans l'adh\'erence des points rationnels proviennent en fait de points ad\'eliques du rev\^etement \'etale naturel $G/Z \to X$ qui ne sont pas orthogonaux au groupe de Brauer de $G/Z$. De m\^eme, dans le th\'eor\`eme \ref{theo PH}, les points ad\'eliques entiers de $X$ orthogonaux au groupe de Brauer de $X$ proviennent de points ad\'eliques entiers sur le rev\^etement \'etale $P \to X$ de $X$ qui ne sont pas orthogonaux au groupe de Brauer de $P$. Dans les deux cas, il y a donc en quelque sorte une obstruction de Brauer-Manin \'etale enti\`ere qui est strictement plus forte que l'obstruction de Brauer-Manin enti\`ere et qui explique ces contre-exemples : on est donc dans une situation analogue \`a celle de l'exemple 5.10 de \cite{CTW} qui traite de formes quadratiques enti\`eres.
\end{rem}

\section{Un exemple sur le corps des nombres rationnels}

Dans cette partie, on construit un contre-exemple \`a l'approximation forte (resp. au principe de Hasse entier) avec conditions de Brauer-Manin, sur le corps $\Q$ des nombres rationnels (i.e. en se passant de l'hypoth\`ese sur la pr\'esence de racines de l'unit\'e dans le corps de base).

\begin{prop} \label{prop ext} Soit $k$ un corps de nombres.
Si $H = Q_8$ est le groupe des quaternions d'ordre $8$ de centre $Z$, alors le morphisme naturel $\Ext^c_k(H, \G_m) \to \Ext^c_k(Z, \G_m)$ est le morphisme nul. 
\end{prop}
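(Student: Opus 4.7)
Le plan est d'exploiter la structure particulière du groupe des quaternions $Q_8$, et plus précisément la trivialité de son multiplicateur de Schur $H_2(Q_8, \Z) = 0$, ce qui permettra de se passer de l'hypothèse sur les racines de l'unité qui intervenait dans le lemme \ref{lem morph brauer nul}.

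Premièrement, je me ramènerais à un calcul de cohomologie des groupes classique. La preuve de la proposition \ref{prop Brauer} fournit des identifications canoniques $\Ext^c_k(Q_8, \G_m) \cong H^2(Q_8, k^*)$ et $\Ext^c_k(Z, \G_m) \cong H^2(Z, k^*)$ (avec action triviale), et le morphisme à étudier s'identifie alors à la restriction usuelle en cohomologie induite par l'inclusion $Z \subset Q_8$.

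Ensuite, par le théorème des coefficients universels et la nullité de $H_2(Q_8, \Z)$, l'inflation $\textup{inf} : H^2(Q_8^{\textup{ab}}, k^*) \to H^2(Q_8, k^*)$ est surjective (l'iso canonique $H^2(Q_8, k^*) \cong \Ext^1(Q_8^{\textup{ab}}, k^*)$ fournie par les coefficients universels se factorise par l'iso analogue $\Ext^1(Q_8^{\textup{ab}}, k^*) \hookrightarrow H^2(Q_8^{\textup{ab}}, k^*)$). Autrement dit, toute extension centrale de $Q_8$ par $\G_m$ est obtenue par image réciproque, via la projection $Q_8 \twoheadrightarrow Q_8^{\textup{ab}} \cong (\Z/2)^2$, d'une extension centrale de $Q_8^{\textup{ab}}$ par $\G_m$. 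Comme $Z = [Q_8, Q_8]$, la composée $Z \hookrightarrow Q_8 \twoheadrightarrow Q_8^{\textup{ab}}$ est le morphisme nul ; par fonctorialité, la restriction à $Z$ d'une telle extension tirée en arrière est donc l'extension scindée. Combinée à la surjectivité de $\textup{inf}$, cette observation entraîne directement que le morphisme $H^2(Q_8, k^*) \to H^2(Z, k^*)$ est nul.

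Le point central sera donc la trivialité du multiplicateur de Schur $H_2(Q_8, \Z) = 0$, résultat classique que l'on peut vérifier par la formule de Hopf appliquée à la présentation standard $\langle i,j \mid i^4 = 1,\ j^2 = i^2,\ jij^{-1} = i^{-1} \rangle$. Le reste de l'argument est relativement formel, et n'utilise à aucun moment la présence de racines de l'unité dans $k$, ce qui rend la proposition valable pour tout corps de nombres (et en fait pour tout corps de caractéristique nulle).
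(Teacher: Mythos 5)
Votre preuve est correcte, mais elle emprunte une voie r\'eellement diff\'erente de celle du texte. La d\'emonstration de l'article se ram\`ene aux coefficients $\mu_8$, puis classifie (\`a l'aide de GAP) les groupes d'ordre $64$ qui sont extensions centrales de $Q_8$ par un groupe cyclique d'ordre $8$ (les groupes 44 et 126), et traite chaque cas explicitement en suivant l'action de Galois sur les commutateurs. Vous utilisez au contraire l'identification $\Ext^c_k(H,\G_m)\cong H^2(H,k^*)$ (action triviale), qui est exactement celle \'etablie dans la preuve de la proposition \ref{prop Brauer} via la cohomologie de Hochschild -- et sous laquelle le morphisme \'etudi\'e devient bien la restriction usuelle, compatibilit\'e qu'il convient de mentionner explicitement --, puis la nullit\'e du multiplicateur de Schur $H_2(Q_8,\Z)=0$ : par naturalit\'e de la suite exacte des coefficients universels appliqu\'ee \`a $Q_8\to Q_8^{\textup{ab}}$ (formulation plus s\^ure que la factorisation que vous invoquez), l'inflation $H^2(Q_8^{\textup{ab}},k^*)\to H^2(Q_8,k^*)$ est surjective, et comme $Z=[Q_8,Q_8]$ meurt dans $Q_8^{\textup{ab}}$, toute classe a une restriction nulle \`a $Z$. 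Votre approche \'evite \`a la fois le recours \`a GAP et toute discussion galoisienne, et elle isole le vrai m\'ecanisme : l'\'enonc\'e vaut en fait pour tout groupe fini $H$ de multiplicateur de Schur trivial et tout sous-groupe central $Z\subset D(H)$, sur tout corps de caract\'eristique nulle. Elle \'eclaire aussi pourquoi l'hypoth\`ese de racines de l'unit\'e du lemme \ref{lem morph brauer nul} est ici superflue, et pourquoi elle ne l'est pas en g\'en\'eral : pour $H=D_4$, dont le multiplicateur est non trivial, la restriction $H^2(D_4,\Q^*)\to H^2(Z,\Q^*)$ n'est pas nulle (en poussant l'extension $1\to\Z/2\to Q_{16}\to D_4\to 1$ le long de $\{\pm1\}\subset\Q^*$, on obtient une classe dont la restriction au centre vaut $-1\in\Q^*/(\Q^*)^2$), de sorte que la nullit\'e de $H_2(Q_8,\Z)$ est exactement l'ingr\'edient n\'ecessaire. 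En contrepartie, la preuve de l'article reste purement explicite au niveau des groupes finis et parall\`ele \`a celle du lemme \ref{lem morph brauer nul}, sans faire appel aux coefficients universels.
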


\begin{proof}
On s'inspire de la preuve du lemme \ref{lem morph brauer nul} : on constate qu'il suffit de montrer que le morphisme 
$$\Ext^c_k(H, \mu_8) \to \Ext^c_k(Z, \mu_8)$$
est le morphisme nul. Or la donn\'ee d'une classe dans le groupe $\Ext^c_k(H, \mu_8)$ correspond \`a la donn\'ee d'un groupe abstrait $E$ de cardinal $64$, muni d'une action du groupe de Galois absolu de $\Q$ et s'ins\'erant dans une suite exacte centrale de $k$-groupes
$$1 \to \mu_8 \to E \to H \to 1 \, .$$
En parcourant la liste des groupes d'ordre $64$ avec GAP, on trouve seulement deux groupes qui sont extensions centrales de $H$ par un groupe cyclique d'ordre $8$, en l'occurrence les groupes num\'erot\'es 44 et 126 dans la classification de GAP. Or le second de ces deux groupes est le produit direct $\Z/8\Z \times H$, donc toute action de Galois sur ce groupe compatible avec la suite exacte pr\'ec\'edente est donn\'ee par un cocycle galoisien \`a valeurs dans le module des caract\`eres $\widehat{H}$ de $H$. Par cons\'equent, l'image d'une telle extension centrale par le morphisme $\Ext^c_k(H, \mu_8) \to \Ext^c_k(Z, \mu_8)$ correspond \`a l'image du cocycle par le morphisme $H^1(k,\widehat{H}) \to H^1(k, \widehat{Z})$, qui est clairement le morphisme nul.

Consid\'erons maintenant le cas restant du groupe $E$ num\'ero 44; ce groupe peut \^etre d\'efini par g\'en\'erateurs et relations de la fa\c con suivante : 
$$E = \langle a, b : a^{16} = b^4 = 1, [a,b] = b^2 \rangle \, .$$
Avec les notations pr\'ec\'edentes, le sous-groupe central cyclique d'ordre $8$ est le sous-groupe engendr\'e par $a^2 b^2$. Or on a la relation suivante dans $E$ :
$${^\gamma b^2} = {^\gamma [a,b]} = [^\gamma a,^\gamma b] = [a,b] = b^2 \, ,$$
pour tout $\gamma \in \Gamma_k$, donc l'action de Galois sur $b^2$ est triviale.

Or l'extension centrale de $Z$ par $\mu_8$ obtenue en tirant $E$ en arri\`ere est donn\'ee par les g\'en\'erateurs suivants :
$$1 \to \mu_8 = \langle a^2 b^2 \rangle \to \langle a^2, b^2 \rangle \to \Z / 2 \Z = \langle b^2 \rangle \to 1 \, .$$
Cette suite est donc scind\'ee comme suite de $k$-groupes (puisque l'action de Galois sur $b^2$ est triviale), donc cela assure que l'image de la classe de $E$ par le morphisme $\Ext^c_k(H, \mu_8) \to \Ext^c_k(Z, \mu_8)$ est la classe triviale.

Finalement, on a bien montr\'e que le morphisme $\Ext^c_k(H, \mu_8) \to \Ext^c_k(Z, \mu_8)$ \'etait le morphisme nul, ce qui termine la preuve de la proposition.
 \end{proof}

En reprenant les d\'emonstrations des th\'eor\`emes \ref{theo AF} et \ref{theo PH}, et en y rempla\c cant le lemme \ref{lem morph brauer nul} par la proposition \ref{prop ext}, on obtient les deux corollaires suivants :
\begin{cor}
Soit $k$ un corps de nombres. On consid\`ere l'espace homog\`ene $X := \SL_{4,k}/H$, o\`u $H$ le groupe $Q_8$ des quaternions d'ordre 8 et $H \to \SL_{4,k}$ est une repr\'esentation fid\`ele de dimension $4$ sur $k$.

Alors pour tout ensemble fini $S_0$ de places de $k$, il existe un point ad\'elique $x \in X(\A_k)^\Br$ tel que $\pi^{S_0}(x) \notin \overline{X(k)}^{S_0}$.
Autrement dit, "l'obstruction de Brauer-Manin \`a l'approximation forte" sur (la vari\'et\'e rationnelle) $X$ n'est pas la seule.
\end{cor}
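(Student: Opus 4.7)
The plan is to follow the argument of Theorem \ref{theo AF} essentially verbatim, substituting Proposition \ref{prop ext} for Lemma \ref{lem morph brauer nul}. I take $G := \SL_{4,k}$, $H := Q_8$, and the unique nontrivial cyclic central subgroup $Z := Z(H) = D(H) \cong \Z/2\Z$, with quotient $H' := H/Z \cong (\Z/2\Z)^2$. A faithful $4$-dimensional representation of $Q_8$ over any number field $k$ is provided for instance by the embedding $Q_8 \hookrightarrow \mathbf{H}^\times \subset \GL_4(k)$ coming from the rational quaternion algebra, so the setup of the theorem is available for arbitrary $k$.

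First, I invoke Proposition \ref{prop Brauer} to get the canonical isomorphism $\Delta'_X : \Ext^c_k(H, \G_m) \xrightarrow{\cong} \Br(X)/\Br(k)$, functorial in $H$. Combining this with Proposition \ref{prop ext}, which is the new ingredient valid without any roots-of-unity hypothesis, shows that the map $f^* : \Br(X)/\Br(k) \to \Br(Y)/\Br(k)$ induced by $f : Y := \SL_{4,k}/Z \to X$ is zero. This is the only place where the hypothesis of Theorem \ref{theo AF} on the presence of $p^{n+1}$-th roots of unity was used, and it is now bypassed for $Q_8$ via the finite GAP-based enumeration carried out in Proposition \ref{prop ext}.

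Next, I argue by contradiction: suppose that $\overline{X(k)}^{S_0} \supset \pi^{S_0}(X(\A_k)^{\Br})$ for some finite $S_0$. The diagram chase in the proof of Theorem \ref{theo AF} then applies without change. Given $(z_v) \in P^1(k, Z)$ with image $(h_v) \in P^1(k, H)$, the vanishing of $f^*$ forces $\partial_H((h_v)) = 0$; the approximation assumption yields, for all sufficiently large $S \supset S_0$, classes $h^S \in H^1(\mathcal{O}_{k,S}, H)$ matching $h_v$ off $S_0$. The abelian quotient $H' \cong (\Z/2\Z)^2$ together with the finiteness of $\Sha^1_\omega(k, H')$ kills the image $h'^S$ in $H^1(\mathcal{O}_{k,S}, H')$ (after enlarging $S$), so $h^S$ lifts to $z^S \in H^1(\mathcal{O}_{k,S}, Z)$, and the injectivity of $H^1(k_v, Z) \to H^1(k_v, H)$ (valid because $H$ is constant and $Z$ is central) forces $z^S_v = z_v$ on $S \setminus S_0$.

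The Poitou-Tate exact sequence for the finite constant group $Z$ then delivers the contradiction: one chooses $(z_v)$ vanishing on $S_0$ and not orthogonal to $H^1(k, \widehat{Z})$, which is possible since $H^1(k, \widehat{Z})$ is infinite while $\Sha^1_\omega(k, \widehat{Z})$ is finite. I do not anticipate any genuine obstacle in the transcription: $Z = D(H)$ remains cyclic and central in $H$, $H/Z$ remains abelian with finite $\Sha^1_\omega$, and all the cohomological inputs of Theorem \ref{theo AF} apply verbatim. The entire non-trivial content of the corollary is packed into Proposition \ref{prop ext}.
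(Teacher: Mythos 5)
Your proposal is correct and follows exactly the route the paper intends: it reruns the proof of Theorem \ref{theo AF} with $G=\SL_{4,k}$, $H=Q_8$, $Z=Z(H)=D(H)$, replacing Lemma \ref{lem morph brauer nul} (the only place where the roots-of-unity hypothesis entered) by Proposition \ref{prop ext}, which is precisely what the paper does. The remaining steps (functoriality via Proposition \ref{prop Brauer}, the descent/Poitou--Tate contradiction, triviality of $\Sha^1_{\omega}$ for the constant quotient $H'\cong(\Z/2\Z)^2$) transcribe verbatim, as you say.
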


Par exemple, on peut prendre $k=\Q$ dans ce premier corollaire. En particulier, le th\'eor\`eme 1.4 de \cite{LX} assure qu'il existe un $\Z$-sch\'ema fid\`element plat, s\'epar\'e de type fini $\mathcal{X}$ pour lequel l'obstruction de Brauer-Manin au principe de Hasse entier n'est pas la seule, c'est-\`a-dire que 
$$\left(\prod_{p \leq \infty} \mathcal{X}(\Z_p) \right)^{\Br(X)} \neq \emptyset \, \textup{ et } \mathcal{X}(\Z) = \emptyset \, ,$$
et tel que $\mathcal{X}_\Q \cong  \SL_{4,\Q}/Q_8$.

En augmentant l\'eg\'erement le corps de base, on trouve un contre-exemple lisse au principe de Hasse entier qui est lui-m\^eme un espace homog\`ene d'un sch\'ema en groupes r\'eductif :
\begin{cor}
Soit $k$ un corps de nombres et $H$ le groupe $Q_8$ des quaternions d'ordre $8$. Soit $S_0$ un ensemble fini de places de $k$ tel que $2 \in \mathcal{O}_{k, S_0}^*$ et $\Pic(\mathcal{O}_{k,S_0}) / 2 \neq 0$.
Alors il existe une repr\'esentation fid\`ele $\rho : H \to \SL_{4,k}$ et un mod\`ele lisse s\'epar\'e de type fini $\mathcal{X} \to \Spec(\mathcal{O}_{k, S_0})$ de $X := \SL_{4,k}/H$, qui est un $\textup{Spec}(\mathcal{O}_{k, S_0})$-espace homog\`ene de $\SL_{1, \mathcal{A}}$ pour une certaine alg\`ebre d'Azumaya (g\'en\'eriquement d\'eploy\'ee) $\mathcal{A}$ sur $\mathcal{O}_{k, S_0}$, tel que 
$$\left( \prod_{v \in S_0} X(k_v) \times \prod_{v \notin S_0} \mathcal{X}(\mathcal{O}_v) \right)^{\Br(X)} \neq \emptyset \, ,$$
alors que
$$\mathcal{X}(\mathcal{O}_{k, S_0}) = \emptyset \, .$$
En particulier, l'obstruction de Brauer-Manin au principe de Hasse entier sur $\mathcal{X}$ n'est pas la seule.
\end{cor}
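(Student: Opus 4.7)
Le plan est de reprendre mot à mot la démonstration du théorème \ref{theo PH} en y substituant la proposition \ref{prop ext} au lemme \ref{lem morph brauer nul}, puisque c'est précisément ce passage qui, dans la preuve originale, exigeait l'hypothèse cyclotomique sur $k$, et que la proposition \ref{prop ext} traite exactement le cas $H = Q_8$ sur un corps de nombres quelconque.

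Je commencerais par expliciter une représentation fidèle $\rho : Q_8 \to \SL_{4,A}$, où $A := \mathcal{O}_{k,S_0}$, qui envoie le centre $Z$ dans $\mu_{4,A}$. La représentation donnée par la translation à gauche de $Q_8$ sur le $\Z$-module libre $\Z\{1,i,j,k\}$ des quaternions entiers convient : elle est définie sur $\Z$, fidèle, à valeurs dans $\SL_4$, et envoie l'élément central $-1$ sur $-I_4 \in \mu_{2,A} \subset \mu_{4,A}$. On obtient ainsi le diagramme commutatif liant $1 \to Z \to H \to H' \to 1$ à $1 \to \mu_{4,A} \to \SL_{4,A} \to \PGL_{4,A} \to 1$, exactement comme au début de la preuve du théorème \ref{theo PH}.

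J'appliquerais ensuite le lemme \ref{lem exact} avec $p = 2$, $d = 4$, $m = 2$ : l'hypothèse $\Pic(A)/2 \neq 0$ garantit la non-surjectivité de $i_* : H^2(A, Z) \to H^2(A, \mu_4)$, et le théorème de Harder sur la surjectivité de $\partial_{\SL}$ fournit un $A$-torseur $\mathcal{P}$ sous $\PGL_{4,A}$ hors de l'image de $\overline{\rho}_*$, dont la classe dans $H^2(A, \mu_4)$ provient de $\Pic(A)/4$. Je poserais $\mathcal{X} := \mathcal{P}/H'$ et reprendrais sans modification les vérifications du théorème \ref{theo PH} : représentabilité lisse, structure d'espace homogène sous la forme intérieure $\SL_{1, \mathcal{A}}$, trivialité générique de $\mathcal{A}$ provenant de $\Pic(k)/d = 0$, et identification de la fibre générique à $\SL_{4,k}/Q_8$.

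Les deux conclusions finales s'ensuivraient alors automatiquement : d'une part $\mathcal{X}(A) = \emptyset$ résulte directement du choix de $\mathcal{P}$ hors de l'image de $\overline{\rho}_*$ ; d'autre part la non-vacuité de l'ensemble de Brauer-Manin provient des points locaux de $\mathcal{P}$ (assurés par $H^1(\mathcal{O}_v, \PGL_4) = 1$ hors de $S_0$ et par la trivialité de $\Pic(k_v)/4$ sur $S_0$), combinés à la nullité du morphisme $\pi^* : \Br(X)/\Br(k) \to \Br(P)/\Br(k)$ qui découle conjointement de la fonctorialité de $\Delta'_X$ (proposition \ref{prop Brauer}) et de la proposition \ref{prop ext}. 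Le véritable obstacle n'est donc pas mathématique : toute la nouveauté conceptuelle est encapsulée dans la proposition \ref{prop ext}, et le reste est une réécriture fidèle du théorème \ref{theo PH}.
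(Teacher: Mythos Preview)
Your proposal is correct and matches the paper's own argument exactly: the paper derives this corollary in a single sentence by instructing the reader to rerun the proof of th\'eor\`eme \ref{theo PH} with proposition \ref{prop ext} replacing lemme \ref{lem morph brauer nul}, and that is precisely what you do. Your added concreteness (the explicit integral representation of $Q_8$ on the Lipschitz quaternions, and the verification that lemme \ref{lem exact} applies with $p=2$, $d=4$, $m=2$ using only $-1\in k$) is compatible with the paper's construction via the induced representation and does not change the route.
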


Par exemple, on peut v\'erifier que le corps quadratique $k = \Q(\sqrt{-21})$ et  l'ensemble $S_0$ form\'e des places archim\'ediennes et de la place (unique) au-dessus de $2$ dans $k$ ($2$ est ramifi\'e dans $k/\Q$) satisfont les hypoth\`eses de ce second corollaire.


\providecommand{\bysame}{\leavevmode ---\ }
\providecommand{\og}{``}
\providecommand{\fg}{''}
\providecommand{\smfandname}{et}
\providecommand{\smfedsname}{\'eds.}
\providecommand{\smfedname}{\'ed.}
\providecommand{\smfmastersthesisname}{M\'emoire}
\providecommand{\smfphdthesisname}{Th\`ese}


\begin{thebibliography}{}
\bibitem[A]{Ana} S.~Anantharaman, {\em Sch\'emas en groupes, espaces homog\`enes et espaces
              alg\'ebriques sur une base de dimension 1}, Bull. Soc. Math. France, M\'em. {\bf 33} (1973) 5--79. 

\bibitem[BD]{BD} M.~Borovoi et C.~Demarche
	{\em Manin obstruction to strong approximation for homogeneous spaces}, 
	 Comment. Math. Helv. {\bf 88} (2013) no. 1, 1--54. 

\bibitem[CTW]{CTW} J.~-L.~Colliot-Th\'el\`ene et O.~Wittenberg,
	    {\em Groupe de Brauer et points entiers de deux familles de surfaces cubiques affines},
	    Am. J.  Math. {\bf 134}  (2012) no. 5, 1303--1327.

\bibitem[CTX]{CTX} J.~-L.~Colliot-Th\'el\`ene et F.~Xu,
              {\em Brauer-Manin obstruction for integral points of homogeneous spaces and representation by integral quadratic forms},  Compos. Math. {\bf 145} (2009) no. 2, 309--363.
              
\bibitem[CTX2]{CTX2} J.~-L.~Colliot-Th\'el\`ene et F.~Xu,
              {\em Strong approximation for the total space of certain quadric fibrations},  Acta Arithmetica {\bf 157} (2013) 169--199.              
               
\bibitem[D]{D} C.~Demarche,
              {\em Groupe de Brauer non ramifi\'e d'espaces homog\`enes \`a stabilisateurs finis}, Math. Annalen {\bf 346}, no. 4 (2010) 949--968.                
               
\bibitem[DG]{DG} M.~Demazure et P.~Gabriel, 
{\em Groupes alg\'ebriques. Tome I: G\'eom\'etrie alg\'ebrique, g\'en\'eralit\'es, groupes commutatifs}, avec un appendice {\em Corps de classes local} par Michiel Hazewinkel. Masson \& Cie, \'Editeur, Paris; North-Holland Publishing Co., Amsterdam, 1970. xxvi+700 pp. 

\bibitem[LX]{LX} Q.~Liu et F.~Xu, {\em Very strong approximation for certain algebraic varieties},      
	      pr\'epublication (2012).

               
\bibitem[G]{G} S.~Gille, {\em On the Brauer group of a semisimple algebraic group},      
	      Adv. Math. {\bf 220} (2009) no. 3, 913--925.   
	      
\bibitem[GP]{GP} P.~Gille et A.~Pianzola, {\em Isotriviality and \'etale cohomology of Laurent polynomial rings},
	 J. Pure Appl. Algebra {\bf 212} (2008) no. 4, 780--800.      

\bibitem[H1]{H1} D.~Harari, {\em Quelques propri\'et\'es d'approximation reli\'ees \`a la cohomologie galoisienne d'un groupe alg\'ebrique fini},  Bull. Soc. Math. France {\bf 135}, no. 2 (2007) 549--564.

\bibitem[H2]{H2} G.~Harder, {\em Halbeinfache Gruppenschemata \"uber Dedekindringen},      
	       Invent. Math. {\bf 4} (1967) 165--191. 
	       
\bibitem[M]{M} J.~S.~Milne, 
{\em Arithmetic duality theorems}, deuxi\`eme \'edition. BookSurge, LLC, Charleston, SC, 2006. viii+339 pp.

\bibitem[P]{P} B.~Poonen,
{\em Insufficiency of the Brauer-Manin obstruction applied to \'etale covers},  Ann. of Math. (2) {\bf 171} (2010) no. 3, 2157--2169.

\bibitem[S1]{S} J.~P.~Serre, 
{\em Cohomologie galoisienne}, cinqui\`eme \'edition. Lecture Notes in Mathematics, vol. 5, Springer-Verlag, Berlin, 1994. x+181 pp.

\bibitem[S2]{Sko} A.~Skorobogatov, 
{\em Torsors and rational points}. Cambridge Tracts in Mathematics, vol. 144, Cambridge University Press, Cambridge, 2001. viii+187 pp.

\bibitem[SGA3]{SGA3} M.~Demazure et A. Grothendieck,
{\em S\'eminaire de g\'eom\'etrie alg\'ebrique du Bois Marie
$1962$--$64$, Sch\'emas en groupes} (M.~Demazure, A. Grothendieck,
eds.), Lecture Notes Math., vol.~151, 152, 153, Springer-Verlag,
Berlin--New York, 1970.

\bibitem[W]{W} C.~Weibel, 
{\em Algebraic K-theory of rings of integers in local and global fields}, in {\it Handbook of K-theory.} Vol. 1,2, 139--190, Springer, Berlin, 2005.


\end{thebibliography}
\end{document}